\newtheorem{theorem}{Theorem}[section]
\newtheorem{lemma}[theorem]{Lemma}
\newtheorem{conjecture}[theorem]{Conjecture}
\newtheorem{corollary}[theorem]{Corollary}
\theoremstyle{definition}
\theoremstyle{remark}
\begin{document}

\title[]
{Congruences for\\
the Almkvist-Zudilin numbers}



\author{Tewodros Amdeberhan}
\address{Department of Mathematics,
Tulane University, New Orleans, LA 70118}
\email{tamdeber@tulane.edu}

\subjclass[2010]{Primary ??}

\date{\today}

\keywords{??}

\begin{abstract}
Given a prime number $p$, the study of divisibility properties of a sequence $c(n)$ has two contending approaches: $p$-adic valuations and superconcongruences. The former searches for the highest power of $p$ dividing $c(n)$, for each $n$; while the latter (essentially) focuses on the maximal powers $r$ and $t$ such that $c(p^rn)$ is congruent to $c(p^{r-1}n)$ modulo $p^t$. This is called supercongruence. In this note, we prove modest supercongruences for certain sequences that have come to be known as the Almkvist-Zudilin numbers and two other naturally related ones. 
\end{abstract}

\maketitle

\newcommand{\ba}{\begin{eqnarray}}
\newcommand{\ea}{\end{eqnarray}}
\newcommand{\ift}{\int_{0}^{\infty}}
\newcommand{\nn}{\nonumber}
\newcommand{\no}{\noindent}
\newcommand{\lf}{\left\lfloor}
\newcommand{\rf}{\right\rfloor}
\newcommand{\realpart}{\mathop{\rm Re}\nolimits}
\newcommand{\imagpart}{\mathop{\rm Im}\nolimits}

\newcommand{\op}[1]{\ensuremath{\operatorname{#1}}}
\newcommand{\pFq}[5]{\ensuremath{{}_{#1}F_{#2} \left( \genfrac{}{}{0pt}{}{#3}
{#4} \bigg| {#5} \right)}}

\newtheorem{Definition}{\bf Definition}[section]
\newtheorem{Thm}[Definition]{\bf Theorem}
\newtheorem{Example}[Definition]{\bf Example}
\newtheorem{Lem}[Definition]{\bf Lemma}
\newtheorem{Cor}[Definition]{\bf Corollary}
\newtheorem{Prop}[Definition]{\bf Proposition}
\numberwithin{equation}{section}

\section{Introduction}

\noindent
Let us fix some notational conventions. Denote the set of positive integers by $\mathbb{P}$. For $m\in\mathbb{P}$, let $\equiv_m$ represent congruence modulo $m$. Throughout, assume $p\geq5$ is a prime.

\noindent
The {\em Ap\'ery numbers} $A(n)=\sum_{k=0}^n\binom{n}k^2\binom{n+k}k^2$ were valuable to R. Ap\'ery in his celebrated proof ~\cite{A} 
of the irrationality of $\zeta(3)$. Since then these numbers have been a subject of much research. For example, they stand among a host of other sequences with the property 
$$A(p^rn)\equiv_{p^{3r}} A(p^{r-1}n)$$
now known as {\em supercongruence} $-$ a term dubbed by F. Beukers ~\cite{B}.

\noindent
At the heart of many of these congruences sits the classical example $\binom{pb}{pc}\equiv_{p^3}\binom{b}c$ which is a stronger variant of the famous Lucas's congruence $\binom{pb}{pc}\equiv_p\binom{b}c$. For a compendium of references on the subject of Ap\'ery-type sequences, see ~\cite{S}.

\noindent
In this paper, true to tradition, we shall investigate similar type of divisibility properties (i.e. supercongruences) of the following three sequences. For  $i\in\{0,1,2\}$ and $n\in\mathbb{P}$, define
\begin{align*} a_i(n):&=\sum_{k=0}^{\lfloor(n-i)/3\rfloor}(-1)^{n-k}\binom{3k+i}k\binom{2k+i}k\binom{n}{3k+i}\binom{n+k}k3^{n-3k-i} \\
\end{align*}
In recent literature, $a_0(n)$ are referred to as the Almkvist-Zudilin numbers.
Our motivation for the present work here emanates from the following claim found in ~\cite{O} (see also ~\cite{CCS}).

\begin{conjecture} For a prime $p$ and $n\in\mathbb{P}$, the Almkvist-Zudilin numbers satisfy
$$a_0(pn)\equiv_{p^3}a_0(n).$$
\end{conjecture}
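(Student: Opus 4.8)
The plan is to compare the two sums termwise after isolating the diagonal of the summation. Writing the summand of $a_0(m)$ as $T(m,k)=(-1)^{m-k}3^{m-3k}\binom{3k}k\binom{2k}k\binom m{3k}\binom{m+k}k$, I would split $a_0(pn)=\sum_{0\le k\le\lfloor pn/3\rfloor}T(pn,k)$ into the two blocks $p\mid k$ and $p\nmid k$. On the diagonal block I set $k=pj$ and apply the classical $\binom{pb}{pc}\equiv_{p^3}\binom bc$ to each of $\binom{3pj}{pj}$, $\binom{2pj}{pj}$, $\binom{pn}{3pj}$ and $\binom{p(n+j)}{pj}$; since each error is a multiple of $p^3$, the product of the four congruences is still valid modulo $p^3$, and the block collapses to $\sum_j(-1)^{n-j}3^{p(n-3j)}\binom{3j}j\binom{2j}j\binom n{3j}\binom{n+j}j\pmod{p^3}$. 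This already agrees with $a_0(n)$ except that the power of three is $3^{p(n-3j)}$ rather than $3^{n-3j}$, which is the source of all the remaining difficulty.

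As a warm-up I would settle the congruence modulo $p$: every off-diagonal $T(pn,k)$ carries a factor of $p$, since $p\nmid 3k$ forces a base-$p$ carry in $\binom{pn}{3k}$ by Kummer's theorem, while on the diagonal $3^{p(n-3j)}=3^{n-3j}(3^{p-1})^{n-3j}\equiv 3^{n-3j}\pmod p$ by Fermat, giving $a_0(pn)\equiv a_0(n)\pmod p$ at once. To promote this to $p^3$ I would write $3^{p-1}=1+pq$ with $q=q_p(3)=(3^{p-1}-1)/p\in\mathbb Z$ the Fermat quotient, so that $3^{p(n-3j)}\equiv 3^{n-3j}\bigl(1+(n-3j)pq+\binom{n-3j}{2}p^2q^2\bigr)\pmod{p^3}$. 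Feeding this back, the diagonal block equals $a_0(n)+pq\sum_j c_j(n-3j)+p^2q^2\sum_j c_j\binom{n-3j}{2}\pmod{p^3}$, where $c_j$ is the $j$-th summand of $a_0(n)$.

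The hard part will be that these corrections do not vanish on their own and, simultaneously, the off-diagonal block $\sum_{p\nmid k}T(pn,k)$ is \emph{not} divisible by $p^3$ term by term — already the $k=1$ summand contributes exactly one factor of $p$. Thus the Fermat-quotient corrections on the diagonal must cancel against the off-diagonal block, and the two must be analyzed together rather than separately. My plan is to regroup all of $a_0(pn)$ by the residue $r=k\bmod p$, write $k=pj+r$, expand every binomial in base $p$ via Lucas and its $p^3$-refinement, and show that the $r=0$ block yields $a_0(n)$ together with the $q_p(3)$-corrections found above, while the blocks $r\neq0$ sum, to order $p^3$, to the negatives of those corrections. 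A convenient device for the weighted sums is the multinomial rewriting $\binom{3k}k\binom{2k}k\binom n{3k}=\frac{n!}{(k!)^3(n-3k)!}$, which realizes $\sum_j c_j(n-3j)$ by applying $w\partial_w$ to $(x+y+z+w)^n$, specializing $w=3$, and extracting the $x^jy^jz^j$ diagonal; this should make the needed inner congruences accessible to a direct Lucas-type estimate. Verifying these block cancellations is the technical heart of the argument, while everything else is bookkeeping with $\binom{pb}{pc}\equiv_{p^3}\binom bc$ and the binomial expansion of the Fermat quotient.
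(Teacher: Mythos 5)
First, a point of orientation: the statement you are proving is stated in the paper only as a conjecture. The paper itself does not prove it; Theorem 4.1 establishes only the weaker congruence $a_0(pn)\equiv_{p^2}a_0(n)$, and the author explicitly describes this as ``a weaker version of Conjecture 1.1.'' So there is no proof of the $p^3$ statement in the paper to compare against, and any complete argument you give would be new.

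Your proposal, as written, has a genuine gap: it is a plan rather than a proof, and the step you yourself identify as ``the technical heart of the argument'' --- showing that the off-diagonal blocks ($k\not\equiv 0 \bmod p$) cancel, modulo $p^3$, against the Fermat-quotient corrections $pq\sum_j c_j(n-3j)+p^2q^2\sum_j c_j\binom{n-3j}{2}$ coming from the diagonal --- is never carried out. Everything you do establish (the diagonal reduction via $\binom{pb}{pc}\equiv_{p^3}\binom{b}{c}$, the mod $p$ warm-up via Kummer, the expansion of $3^{p(n-3j)}$ in powers of $pq_p(3)$) is correct and is essentially the same skeleton the paper uses to get the mod $p^2$ result: there the $r=0$ block produces $a_0(n)$ plus a first-order Fermat-quotient correction, which is cancelled by the $1\le r\le\lfloor p/3\rfloor$ block using Corollary 2.7 (itself resting on Lehmer's congruence, Lemma 2.3). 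To push this to $p^3$ you would need, at minimum: (i) a mod $p^3$ refinement of Corollary 2.7 relating $\sum_{k}\binom{3k}{k,k,k}3^{-3k}/k$ to $b_p(3)$ and $b_p(3)^2$; (ii) second-order (harmonic-number) corrections in the congruences for $\binom{pn}{3pj+3r}$ and $\binom{pn+pj+r}{pj+r}$ beyond the Gessel/Lucas level you invoke; and (iii) a verification that the discarded terms with $\lfloor p/3\rfloor<r<p$ vanish modulo $p^3$, not just $p^2$ (the multinomial carry count gives you $p^2$ readily, but $p^3$ requires combining it with the valuation of $\binom{pn}{3pj+3r}$ and checking the borderline cases). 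None of these is stated or proved in your proposal, so the argument does not yet establish the conjecture.
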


\smallskip
\noindent
Our main results can be summarized as: 

{\em if $p$ is a prime and $n\in\mathbb{P}$, then $a_0(pn)\equiv_{p^3}a_0(n)$ and $a_1(pn)\equiv_{p^2}a_2(pn)\equiv_{p^2}0$.}

\section{Preliminary results}

\begin{lemma}  Let $f(k)=\binom{3k}{k,k,k}\frac{3^{-3k}}{k+1}$ and $g(k)=\binom{3k+1}{k,k,k+1}\frac{3^{-3k}}{k+2}$. We have

\smallskip
(a) $\sum_{j\geq0}\binom{b}j\binom{c}j=\binom{b+c}c$,

\smallskip
(b) $\sum_{k=0}^mf(k)=9\binom{m+2}2f(m+1)$,

\smallskip
(c) $\sum_{k=0}^mg(k)=\frac{9(m+1)(m+3)}8g(m+1)$. 
\end{lemma}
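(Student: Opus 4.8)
The plan is to treat the three parts separately, since (a) is a classical binomial identity while (b) and (c) are single-variable hypergeometric summations that yield to telescoping. For part (a), I would recognize the left-hand side as an instance of the Chu--Vandermonde identity: using the symmetry $\binom{c}{j}=\binom{c}{c-j}$, the sum $\sum_{j\geq0}\binom{b}{j}\binom{c}{j}$ becomes $\sum_{j}\binom{b}{j}\binom{c}{c-j}$, which counts the ways to choose $c$ objects from $b+c$ by splitting the choice across the two blocks, giving $\binom{b+c}{c}$. (One should note the sum is finite since the summand vanishes once $j$ exceeds $\min(b,c)$.) No obstacle arises here.

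For parts (b) and (c), the strategy is to exhibit an explicit telescoping certificate, which simultaneously proves the identity and dispenses with a separate base case. For (b), I would set $G(m)=9\binom{m+1}{2}f(m)$ and claim the telescoping relation $f(m)=G(m+1)-G(m)$; summing over $0\leq k\leq m$ and using $G(0)=0$ then yields $\sum_{k=0}^m f(k)=G(m+1)=9\binom{m+2}{2}f(m+1)$. To verify the relation I would compute the consecutive ratio of the hypergeometric term,
\[
\frac{f(m+1)}{f(m)}=\frac{(3m+1)(3m+2)}{9(m+1)(m+2)},
\]
so that $G(m+1)=\tfrac12(3m+1)(3m+2)f(m)$ and $G(m)=\tfrac92 m(m+1)f(m)$; the difference collapses to $f(m)$ precisely because $(3m+1)(3m+2)-9m(m+1)=2$, using the algebraic identity $(3m+1)(3m+2)=9m^2+9m+2$. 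Part (c) is entirely parallel: I would take $H(m)=\tfrac{9}{8}m(m+2)\,g(m)$, establish $g(m)=H(m+1)-H(m)$ via the ratio
\[
\frac{g(m+1)}{g(m)}=\frac{(3m+2)(3m+4)}{9(m+1)(m+3)},
\]
and observe that the telescoping reduces to $(3m+2)(3m+4)-9m(m+2)=8$, i.e. $(3m+2)(3m+4)=9m^2+18m+8$; summing with $H(0)=0$ delivers $\sum_{k=0}^m g(k)=H(m+1)=\tfrac{9(m+1)(m+3)}{8}g(m+1)$.

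There is essentially no serious obstacle: both summands are hypergeometric in $k$, so the existence of such first-order certificates is guaranteed (and would be produced automatically by Gosper's or Zeilberger's algorithm). The only genuine work is the bookkeeping in the two consecutive-ratio computations, where the triple factorial in the multinomial coefficient $\binom{3k}{k,k,k}=(3k)!/(k!)^3$ (respectively $\binom{3k+1}{k,k,k+1}$) must be cancelled carefully against the $3^{-3k}$ and the denominators $k+1$, $k+2$; the payoff is that each telescoping collapses to the constant $2$ or $8$ through an elementary quadratic identity. If one prefers, the same two ratios drive a routine induction on $m$ in place of the telescoping, with the displayed base cases $f(0)=1$, $g(0)=\tfrac12$ verified directly.
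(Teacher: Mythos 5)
Your proof is correct and follows essentially the same route as the paper: Chu--Vandermonde for (a) and an explicit first-order telescoping certificate for (b) and (c), verified via the consecutive ratios of the hypergeometric terms. In fact your certificate $9\binom{m+2}{2}f(m+1)-9\binom{m+1}{2}f(m)=f(m)$ is the correct one; the paper's proof writes the coefficient as $\tfrac13$ instead of $9$, a typo under which the displayed difference would equal $\tfrac{1}{27}f(m)$ rather than $f(m)$.
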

\begin{proof} (a) is Vandermode-Chu's identity; (b) is due to $\frac13\binom{m+2}2f(m+1)-\frac13\binom{m+1}2f(m)=f(m)$;
(c) is verified by $\frac{9(m+1)(m+3)}8g(m+1)-\frac{9m(m+2)}8g(m)=g(m)$.  \end{proof}

\begin{corollary} For each prime $p$, we have the following congruences:
$$\sum_{k=0}^{\lfloor(p-1)/3\rfloor}\binom{3k}{k,k,k}\frac{3^{-3k}}{k+1}\,\equiv_p\,\,0 \qquad \text{and}
\qquad \sum_{k=0}^{\lfloor(p-2)/3\rfloor}\binom{3k+1}{k,k,k+1}\frac{3^{-3k}}{k+2}\,\equiv_p\,\,0.$$
\end{corollary}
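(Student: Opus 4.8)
The plan is to recognize the two sums as the telescoped partial sums already evaluated in Lemma~2.1 and then read off the divisibility from the single surviving term. With $f$ and $g$ as defined there, the summands $\binom{3k}{k,k,k}\frac{3^{-3k}}{k+1}$ and $\binom{3k+1}{k,k,k+1}\frac{3^{-3k}}{k+2}$ are exactly $f(k)$ and $g(k)$, so the first sum is $\sum_{k=0}^{m_1} f(k)$ with $m_1=\lfloor(p-1)/3\rfloor$ and the second is $\sum_{k=0}^{m_2} g(k)$ with $m_2=\lfloor(p-2)/3\rfloor$. Parts (b) and (c) of Lemma~2.1 then collapse these to
$$\sum_{k=0}^{m_1} f(k)=9\binom{m_1+2}{2}f(m_1+1),\qquad \sum_{k=0}^{m_2} g(k)=\frac{9(m_2+1)(m_2+3)}{8}\,g(m_2+1),$$
so the entire problem reduces to computing the $p$-adic valuations of the two right-hand sides.

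First I would confirm that each side is a genuine element of $\mathbb{Z}_{(p)}$, so that ``$\equiv_p 0$'' is meaningful: for $k$ in the stated range all the denominators $k+1$, $k+2$ are strictly less than $p$ and $3$ is a unit mod $p$, hence every $f(k)$ and $g(k)$ is a $p$-adic integer. Next I would argue that the rational prefactors $9\binom{m_1+2}{2}$ and $\frac{9(m_2+1)(m_2+3)}{8}$ are $p$-adic units; this amounts to checking that none of $m_i+1,\,m_i+2,\,m_i+3$ is divisible by $p$ and that $8,9$ are units, all of which is clear because each of these integers lies strictly between $0$ and $p$.

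The heart of the matter is to show that the surviving terms $f(m_1+1)$ and $g(m_2+1)$ are divisible by $p$. Here I would split into the cases $p\equiv 1$ and $p\equiv 2 \pmod 3$ to resolve the floors. In each case the top index of the relevant multinomial coefficient, namely $3(m_1+1)$ for $f$ and $3(m_2+1)+1$ for $g$, lands in the set $\{p,p+1,p+2\}$, hence in the interval $[p,2p)$. Consequently the numerator factorial contributes exactly one factor of $p$, while every factorial appearing in the denominator has argument $<p$ and therefore contributes none; thus $v_p\big(f(m_1+1)\big)=v_p\big(g(m_2+1)\big)=1$, giving $f(m_1+1)\equiv_p 0$ and $g(m_2+1)\equiv_p 0$. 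Combined with the unit prefactors, both sums are $\equiv_p 0$.

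The only real obstacle is the bookkeeping inside the case analysis: one must verify, for each residue of $p$ modulo $3$, that the shift from $m_i$ to $m_i+1$ pushes the multinomial's top index exactly into $[p,2p)$ while keeping all denominator arguments below $p$. Once this valuation count is pinned down, the result is an immediate consequence of Lemma~2.1, and I expect no genuine difficulty beyond this careful accounting.
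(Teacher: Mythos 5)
Your proposal is correct and follows the paper's own route: both reduce the two sums via the telescoping identities of Lemma~2.1(b),(c) to the single surviving terms $9\binom{m+2}{2}f(m+1)$ and $\tfrac{9(m+1)(m+3)}{8}g(m+1)$, and then show those terms are divisible by $p$. The only divergence is in that last step: the paper rewrites the surviving term as $\binom{3m+4}{m+1}\binom{2m+3}{m+1}$ times a $p$-adic unit and invokes Lucas's congruence, which forces it to treat $p=5,7$ separately (since $\binom{3}{\ell+1}$ need not vanish for small $\ell$), whereas your direct valuation count --- top factorial argument in $\{p,p+1,p+2\}\subset[p,2p)$, all denominator arguments strictly below $p$ --- is uniform in $p\geq 5$ and dispenses with the special cases; I verified the bookkeeping you deferred ($m_i+1,m_i+2,m_i+3<p$ in both residue classes mod $3$), and it checks out.
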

\begin{proof} By Lemma 2.1(b) we gather $\sum_{k=0}^m\binom{3k}{k,k,k}\frac{3^{-3k}}{k+1}=
\binom{m+2}2\binom{3m+4}{m+1}\binom{2m+3}{m+1}\frac{3^{-3m-1}}{3m+4}$. Put $m=\lfloor\frac{p-1}3\rfloor$.
If $p=3\ell+1\geq11$ then $\binom{3m+4}{m+1}=\binom{p+3}{\ell+1}\equiv_p\binom{3}{\ell+1}\equiv_p0$; if $p=3\ell+2\geq11$ then $\binom{3m+4}{m+1}=\binom{p+2}{\ell+1}\equiv_p\binom{2}{\ell+1}\equiv_p0$. For $p=5, 7$, we have $\binom{2m+3}{m+1}\equiv_p0$. In all cases, the desired congruence holds.

\noindent
For the second congruence, Lemma 2.1(c) gives $\sum_{k=0}^m\binom{3k+1}{k,k,k+1}\frac{3^{-3k}}{k+2}=\frac{m+1}8
\binom{3m+4}{m+1}\binom{2m+3}{m+1}3^{-3m-1}$. Put $m=\lfloor\frac{p-2}3\rfloor$.
If $p=3\ell+1\geq11$ then $\binom{3m+4}{m+1}=\binom{p+3}{\ell+1}\equiv_p\binom{3}{\ell+1}\equiv_p0$; if $p=3\ell+2\geq11$ then $\binom{3m+4}{m+1}=\binom{p+2}{\ell+1}\equiv_p\binom{2}{\ell+1}\equiv_p0$. For $p=5, 7$, we have $\binom{2m+3}{m+1}\equiv_p0$. In all cases, the required congruence remains valid.
\end{proof}

\noindent
{\em Fermat quotients} are numbers of the form $b_p(x)=\frac{x^{p-1}-1}p$ and they played a useful role in the study
of cyclotomic fields and Fermat's Last Theorem, see ~\cite{R}. We need one of their divisibility properties as stated in the
next result, see Lehmer (\cite{L}, page 358).
\begin{lemma} For each prime $p$, it holds that
$$\frac{b_p(3)}2-\frac{p\,b_p(3)^2}4\equiv_{p^2}\sum_{r=1}^{\lfloor p/3\rfloor}\frac1{p-3r}.$$
\end{lemma}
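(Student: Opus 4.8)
The plan is to expand the summand as a $p$-adic geometric series and reduce the claim to two classical partial harmonic sums, whose values in terms of $b_p(3)$ are the real input. Set $m=\lfloor p/3\rfloor$ and $q=b_p(3)$. Since each $p-3r$ (for $1\le r\le m$) is prime to $p$, I have
$$\frac{1}{p-3r}=-\frac{1}{3r}\cdot\frac{1}{1-p/(3r)}\equiv-\frac{1}{3r}-\frac{p}{9r^2}\pmod{p^2},$$
and summing over $r$ yields
$$\sum_{r=1}^{m}\frac{1}{p-3r}\equiv-\frac13H_m-\frac{p}{9}H_m^{(2)}\pmod{p^2},\qquad H_m=\sum_{r=1}^m\frac1r,\quad H_m^{(2)}=\sum_{r=1}^m\frac1{r^2}.$$
So it is enough to evaluate $H_m$ modulo $p^2$ and $H_m^{(2)}$ modulo $p$ in terms of $q$. (The very same reduction follows structurally from the involution $j\mapsto p-j$, because the set $\{\,p-3r:1\le r\le m\,\}$ is exactly $\{\,j\in\{1,\dots,p-1\}:j\equiv p\pmod 3\,\}$; this is where the case split $p\equiv1,2\pmod 3$ of Corollary 2.2 naturally reappears.)

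Next I would pin down the two sums. Modulo $p$ one has the Glaisher congruence $H_m\equiv-\tfrac32 q\pmod p$, provable by the standard device $\binom{p-1}{k-1}\equiv(-1)^{k-1}(1-pH_{k-1})\pmod{p^2}$ combined with the pairing $j\leftrightarrow p-j$. For $H_m^{(2)}$ one uses Fermat's little theorem in the form $r^{-2}\equiv r^{p-3}\pmod p$, so that $H_m^{(2)}\equiv\sum_{r=1}^m r^{p-3}\pmod p$ is a power sum, which Faulhaber's formula expresses through the Bernoulli number $B_{p-3}$. Carrying $H_m$ one order further, to modulo $p^2$, produces a term in $q^2$ together with a second term again proportional to $B_{p-3}$.

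The decisive point, and the step I expect to be the main obstacle, is that the two Bernoulli contributions cancel. Writing the $\bmod\,p^2$ expansion of $H_m$ and the $\bmod\,p$ expansion of $H_m^{(2)}$ with their $B_{p-3}$-coefficients displayed, the combination $-\tfrac13H_m-\tfrac{p}{9}H_m^{(2)}$ must eliminate $B_{p-3}$ exactly, leaving only the $q$ and $q^2$ terms, which then assemble into $\tfrac{q}{2}-\tfrac{p\,q^2}{4}$. Establishing this cancellation, equivalently the precise second-order shape of $H_{\lfloor p/3\rfloor}$ (Lehmer's computation on p.~358), is the crux; everything else is the routine bookkeeping above. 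As a sanity check I would confirm the final constants against small primes: for $p=5$ ($q=16$) both sides are $\equiv 13\pmod{25}$, and for $p=7$ ($q=104$) both sides are $\equiv 38\pmod{49}$.
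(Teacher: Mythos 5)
Your preliminary reduction is correct: the expansion $\frac{1}{p-3r}\equiv-\frac{1}{3r}-\frac{p}{9r^2}\pmod{p^2}$ is valid since $p\nmid 3r$, the resulting congruence $\sum_{r=1}^{m}\frac{1}{p-3r}\equiv-\frac13H_m-\frac{p}{9}H_m^{(2)}\pmod{p^2}$ with $m=\lfloor p/3\rfloor$ is right, and your numerical checks at $p=5,7$ (values $13$ and $38$) do hold. But the argument stops exactly where the content of the lemma begins. What remains to be supplied is (i) the value of $H_{\lfloor p/3\rfloor}$ modulo $p^2$ (Glaisher's congruence $H_m\equiv-\frac32b_p(3)\pmod p$ is only the first-order term), (ii) the value of $H^{(2)}_{\lfloor p/3\rfloor}$ modulo $p$, and (iii) the verification that in the combination $-\frac13H_m-\frac{p}{9}H_m^{(2)}$ the Bernoulli contributions cancel and the survivors assemble into $\frac{b_p(3)}2-\frac{p\,b_p(3)^2}4$. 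Equivalently, your plan amounts to proving $H_{\lfloor p/3\rfloor}\equiv-\frac32b_p(3)+\frac34p\,b_p(3)^2-\frac p3H^{(2)}_{\lfloor p/3\rfloor}\pmod{p^2}$, which is itself one of Lehmer's second-order congruences; none of the required coefficients are derived in your writeup, and the claimed cancellation is asserted rather than checked. You explicitly label this step ``the crux'' and defer it to ``Lehmer's computation on p.~358'' --- which is precisely the citation the paper uses for the entire lemma. So the proposal does not prove the statement; it reformulates it as an equivalent pair of classical congruences and then outsources those to the same source. Carrying out (i)--(iii) honestly requires the Faulhaber/Bernoulli-polynomial computation at the shifted argument (the power sums up to $\lfloor p/3\rfloor$ rather than $p-1$), and that computation \emph{is} Lehmer's proof.

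For context on the comparison you were asked to make: the paper itself offers no proof of this lemma at all --- it is stated as an imported result of Lehmer, with a pointer to page 358 of that paper. So there is no internal argument for your approach to diverge from; both you and the author ultimately rest on the same external reference, and your added geometric-series bookkeeping, while correct and a sensible first step toward a self-contained proof, does not close the essential gap.
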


\begin{corollary} Let $H(n)=\sum_{j=1}^n\frac1j$ be the harmonic numbers, $p$ a prime and $n\in\mathbb{P}$, Then,
$$3\left(\frac{3^{(p-1)n}-1}p\right)\equiv_p-(2n)\,H(\lfloor p/3\rfloor).$$
\end{corollary}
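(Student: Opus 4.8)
The plan is to route everything through the Fermat quotient $b_p(3)=\frac{3^{p-1}-1}{p}$, which by Fermat's little theorem is an integer since $p\geq5$. First I would dispose of the left-hand side by a binomial expansion. Writing $3^{p-1}=1+p\,b_p(3)$ and raising to the $n$-th power,
$$3^{(p-1)n}=\bigl(1+p\,b_p(3)\bigr)^n=1+n\,p\,b_p(3)+\binom{n}{2}p^2b_p(3)^2+\cdots,$$
so every term beyond the linear one carries a factor of $p^2$. Subtracting $1$ and dividing by $p$, all but the first term stay divisible by $p$, whence
$$\frac{3^{(p-1)n}-1}{p}\equiv_p n\,b_p(3),\qquad\text{so}\qquad 3\left(\frac{3^{(p-1)n}-1}{p}\right)\equiv_p 3n\,b_p(3).$$

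It then remains to show $3\,b_p(3)\equiv_p-2\,H(\lfloor p/3\rfloor)$, and for this I would invoke Lemma~2.3 and reduce it modulo $p$. The term $\frac{p\,b_p(3)^2}{4}$ vanishes mod $p$, leaving $\frac{b_p(3)}{2}\equiv_p\sum_{r=1}^{\lfloor p/3\rfloor}\frac{1}{p-3r}$. Since $p-3r\equiv_p-3r$ in the residue field, each summand satisfies $\frac{1}{p-3r}\equiv_p-\frac{1}{3r}$, and summing over $r$ gives
$$\frac{b_p(3)}{2}\equiv_p-\frac13\sum_{r=1}^{\lfloor p/3\rfloor}\frac1r=-\frac13\,H(\lfloor p/3\rfloor).$$
Multiplying through by $6$ yields $3\,b_p(3)\equiv_p-2\,H(\lfloor p/3\rfloor)$, and combining with the previous display gives $3\bigl(\frac{3^{(p-1)n}-1}{p}\bigr)\equiv_p 3n\,b_p(3)\equiv_p-2n\,H(\lfloor p/3\rfloor)$, as claimed.

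The steps are routine, and there is no genuinely hard obstacle; the only points needing care are verifying that the non-linear binomial terms really contribute $0$ mod $p$ after division by $p$ (which relies on $b_p(3)$ being a $p$-adic integer), and tracking the sign and constant in the passage $\frac{1}{p-3r}\equiv_p-\frac{1}{3r}$. The single substantive input is Lemma~2.3 itself; the content of the corollary is essentially to repackage it together with the multiplicativity of the Fermat quotient encoded in the binomial expansion.
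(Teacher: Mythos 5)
Your proof is correct and follows essentially the same route as the paper: reduce Lemma~2.3 modulo $p$ to get $3\,b_p(3)\equiv_p-2H(\lfloor p/3\rfloor)$, then apply the Fermat-quotient multiplicativity $b_p(3^n)\equiv_p n\,b_p(3)$ (which you prove via the binomial expansion, the standard argument the paper leaves implicit when it cites Fermat's little theorem). The extra care you take with the non-linear binomial terms and the sign in $\frac{1}{p-3r}\equiv_p-\frac{1}{3r}$ is exactly what the paper's terse proof glosses over.
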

\begin{proof} Lemma 2.3 implies $3b_p(3)\equiv_p-2H(\lfloor p/3\rfloor)$. On the other hand, Fermat's little theorem implies $b_p(x^n)\equiv_pn\,b_p(x)$. Taking $x=3$, the proof follows.
\end{proof}

\begin{lemma} For $n\in\mathbb{P}$, we have the identity
$$\sum_{k=1}^n(-1)^k\binom{n}k\binom{n+k}k\frac1k=-2H(n).$$
\end{lemma}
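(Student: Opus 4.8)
The plan is to induct on $n$, reducing the recurrence for the left-hand side to a single binomial sum that collapses via the Vandermonde--Chu identity of Lemma 2.1(a). Write $S(n)=\sum_{k=1}^n a_{n,k}$ with $a_{n,k}=\frac{(-1)^k}{k}\binom{n}{k}\binom{n+k}{k}$, and adopt the convention $a_{n-1,n}:=0$. The base case $n=1$ is immediate, since $S(1)=-\binom11\binom21=-2=-2H(1)$. Because $H(n)-H(n-1)=1/n$, the inductive step amounts to proving $S(n)-S(n-1)=-2/n$.

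First I would compute the termwise difference. Writing $\binom{n}{k}\binom{n+k}{k}=\frac{(n+k)!}{k!\,k!\,(n-k)!}$ and peeling off one factor from numerator and denominator, one finds
$$\binom{n}{k}\binom{n+k}{k}-\binom{n-1}{k}\binom{n-1+k}{k}=\frac{(n+k-1)!}{k!\,k!\,(n-k)!}\bigl[(n+k)-(n-k)\bigr]=\frac{2k}{n}\binom{n}{k}\binom{n+k-1}{k},$$
so that $a_{n,k}-a_{n-1,k}=\frac{2(-1)^k}{n}\binom{n}{k}\binom{n+k-1}{k}$ for $1\le k\le n-1$. A direct check using $2\binom{2n-1}{n}=\binom{2n}{n}$ shows that the same formula persists at the boundary index $k=n$ under the convention $a_{n-1,n}=0$. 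Summing over $1\le k\le n$ then yields
$$S(n)-S(n-1)=\frac{2}{n}\sum_{k=1}^n(-1)^k\binom{n}{k}\binom{n+k-1}{k}.$$

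It remains to evaluate the inner sum. Using upper negation $(-1)^k\binom{n+k-1}{k}=\binom{-n}{k}$ and appending the harmless $k=0$ term, Lemma 2.1(a) applied with $b=-n$ and $c=n$ gives
$$\sum_{k=0}^n(-1)^k\binom{n}{k}\binom{n+k-1}{k}=\sum_{k\ge0}\binom{-n}{k}\binom{n}{k}=\binom{0}{n}=0\qquad(n\ge1),$$
whence $\sum_{k=1}^n(-1)^k\binom{n}{k}\binom{n+k-1}{k}=-1$ and $S(n)-S(n-1)=-2/n$, completing the induction.

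I expect the main obstacle to be the bookkeeping in the second paragraph: collapsing the difference of products of binomials into the clean form $\frac{2}{n}\binom{n}{k}\binom{n+k-1}{k}$, and in particular verifying that the derived recurrence survives at $k=n$ rather than needing a separate stray term. One further subtlety is the appeal to Vandermonde--Chu at the negative upper parameter $b=-n$; since Lemma 2.1(a) is a polynomial identity in $b$ for each fixed nonnegative integer $c=n$, it remains valid at $b=-n$, so no separate justification is required.
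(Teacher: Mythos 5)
Your proof is correct, and it takes a genuinely different route from the paper's. The paper derives the identity from Mortenson's WZ-certified formula $\sum_{k=0}^n(-1)^k\binom{n}k\binom{n+k}k\frac1{k+y}=\frac{(-1)^n}y\prod_{j=1}^n\frac{y-j}{y+j}$, treating $y$ as an indeterminate, subtracting $\frac1y$, and computing the limit as $y\to0$; the factor $-2H(n)$ emerges from differentiating the product $\prod_{j=1}^n(j\mp y)$ at $y=0$. You instead induct on $n$: your telescoping computation $a_{n,k}-a_{n-1,k}=\frac{2(-1)^k}{n}\binom{n}{k}\binom{n+k-1}{k}$ checks out (including the boundary case $k=n$ via $2\binom{2n-1}{n}=\binom{2n}{n}$), and the resulting inner sum collapses to $-1$ by upper negation together with Lemma 2.1(a) at $b=-n$, $c=n$, which is legitimate since the Vandermonde--Chu identity is polynomial in $b$ for fixed nonnegative integer $c$. (I verified the case $n=2$: $-6+3=-3=-2H(2)$.) What each approach buys: yours is elementary and self-contained, using only machinery already present in the paper (Lemma 2.1(a)) rather than importing an external identity; the paper's approach, by keeping the free parameter $y$, yields the stronger parametric identity as a byproduct and is arguably shorter once Mortenson's result is taken as given. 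Either proof is acceptable here.
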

\begin{proof} Given any $y\in\mathbb{P}$, E. Mortenson (\cite{M}, page 990) made application of the Wilf-Zeilberger method to prove the identity
$$\sum_{k=0}^n(-1)^k\binom{n}k\binom{n+k}k\frac1{k+y}=\frac{(-1)^n}y\prod_{j=1}^n\frac{y-j}{y+j}.$$
Actually, the same method offers that the identity is valid for an indeterminate $y$. Now, subtract $\frac1y$ from both sides and take the limit as $y\rightarrow0$. The right-hand side takes the form
$$\frac1{n!}\lim_{y\rightarrow0}\left[\frac{\prod_{j=1}^n(j-y)-\prod_{j=1}^n(j+y)}y\right]=-2\sum_{k=1}^n\frac1k.$$
The conclusion is clear.
\end{proof}

\begin{lemma} Suppose $p$ is a prime and $z$ is a variable. Then,
$$\sum_{k=1}^{\lfloor p/3\rfloor}(-1)^k\binom{\lfloor p/3\rfloor}k\binom{\lfloor p/3\rfloor+k}k\frac{z^k}k
\equiv_p\sum_{k=1}^{\lfloor p/3\rfloor}\binom{3k}{k,k,k}\frac{3^{-3k}z^k}k.$$
\end{lemma}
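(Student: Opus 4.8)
The plan is to treat both sides as polynomials in the indeterminate $z$ and to match coefficients of $z^k$ modulo $p$ for each $k$ in the range $1\le k\le\lfloor p/3\rfloor$. Writing $m=\lfloor p/3\rfloor$, the factor $\frac1k$ is common to both sides and, since $1\le k\le m<p$, it is a unit modulo $p$; so after cancelling it the claim reduces to the coefficient-wise congruence
$$(-1)^k\binom{m}k\binom{m+k}k\equiv_p\binom{3k}{k,k,k}3^{-3k},$$
valid for every $k$ with $1\le k\le m$. Note that all factorials $k!$ that will appear are coprime to $p$ and that $3$ is invertible modulo $p$ (as $p\ge5$), so every division below is legitimate modulo $p$.

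Next I would write $p=3m+s$ with $s=p\bmod 3\in\{1,2\}$, so that $3m\equiv-s\pmod p$. Expanding the two binomials as falling and rising products and multiplying through by $3^{2k}$ gives
$$3^{2k}\binom{m}k\binom{m+k}k=\frac1{(k!)^2}\prod_{j=0}^{k-1}(3m-3j)\prod_{j=1}^{k}(3m+3j)\equiv_p\frac1{(k!)^2}\prod_{j=0}^{k-1}(-s-3j)\prod_{j=1}^{k}(-s+3j).$$
The heart of the argument is to recognise the product on the right. Pulling out a factor $(-1)^k$ from the first product, in either case $s=1$ or $s=2$ the remaining factors are exactly the integers in $\{1,\dots,3k\}$ that are not divisible by $3$, namely $\prod_{i=1}^k(3i-2)\cdot\prod_{i=1}^k(3i-1)=\frac{(3k)!}{3^k k!}$, since $(3k)!$ splits as this product times $\prod_{i=1}^k(3i)=3^k k!$. (The two residue classes $3i-2$ and $3i-1$ simply swap roles between the cases $s=1$ and $s=2$, so the final value is the same in both.) Substituting back yields
$$3^{2k}\binom{m}k\binom{m+k}k\equiv_p(-1)^k\frac{(3k)!}{3^k(k!)^3},$$
and multiplying by $(-1)^k3^{-2k}$ gives precisely $(-1)^k\binom{m}k\binom{m+k}k\equiv_p 3^{-3k}\binom{3k}{k,k,k}$, which is the required coefficient congruence.

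The step I expect to be the only genuine obstacle is the bookkeeping in identifying $\prod_{j=0}^{k-1}(s+3j)\prod_{j=1}^{k}(3j-s)$ with the non-multiples-of-$3$ part of $(3k)!$, together with keeping the sign $(-1)^k$ and the powers of $3$ straight; everything else is a routine reduction of falling and rising factorials modulo $p$ using $3m\equiv-s$. No deeper input (such as Lemmas 2.1--2.5) is needed here, since the two sides already agree term-by-term as polynomials in $z$.
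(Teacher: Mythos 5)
Your proof is correct and follows essentially the same route as the paper: both argue term-by-term, reducing $(-1)^k\binom{m}{k}\binom{m+k}{k}$ modulo $p$ to $3^{-3k}\binom{3k}{k,k,k}$ by identifying the surviving product with the non-multiples-of-$3$ part $\frac{(3k)!}{3^k k!}$ of $(3k)!$. The only difference is cosmetic: the paper first rewrites the product as $\binom{2k}{k}\binom{m+k}{2k}$ and treats $p\equiv\pm1\pmod 3$ separately, whereas you expand the two binomials directly and handle both residue classes uniformly via $3m\equiv -s\pmod p$.
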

\begin{proof} The congruence actually holds term-by-term and that is how we proceed. Also, observe that $\binom{n}k\binom{n+k}k=\binom{2k}k\binom{n+k}{2k}$. If $p\equiv_31$, then $\lfloor\frac{p}3\rfloor=\frac{p-1}3$
and hence
\begin{align*} \binom{\frac{p-1}3+k}{2k}
&=\frac{\frac{p-1}3(\frac{p-1}3+k)}{(2k)!}\prod_{j=1}^{k-1}\left(\frac{p-1}3\pm j\right) \\
&\equiv_p\frac{(-1)^k(3k-1)}{3^{2k}(2k)!}\prod_{j=1}^{k-1}(3j\pm1) 
=\frac{(-1)^k(3k)!}{3^{3k}(2k)!k!}. \end{align*}
Therefore, we gather that
$$(-1)^k\binom{\frac{p-1}3}k\binom{\frac{p-1}3+k}k=
(-1)^k\binom{2k}k\binom{\frac{p-1}3+k}{2k}\equiv_p\frac{(3k)!}{3^{3k}!k!^3}=\binom{3k}{k,k,k}3^{-3k}.$$
The proof follows by summing over $k$. The case $p\equiv_3-1$ runs analogously. \end{proof}

\begin{corollary} For a prime $p$, we have the congruence
$$pn\sum_{k=1}^{\lfloor p/3\rfloor}\binom{3k}{k,k,k}\frac{3^{-3k}}k\equiv_{p^2}3\left(3^{(p-1)n}-1\right).$$
\end{corollary}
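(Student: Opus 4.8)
The plan is to chain together the three preceding results $-$ Lemma 2.5, Lemma 2.6, and Corollary 2.4 $-$ into a single congruence modulo $p$ for the sum, and then to upgrade it to the claimed statement modulo $p^2$ by a clean factor-of-$p$ lift. Write $m=\lfloor p/3\rfloor$, and note that since $p\geq5$, every index $k$ in the range $1\leq k\leq m<p$ is coprime to $p$, as is $3$; hence each term $\binom{3k}{k,k,k}3^{-3k}/k$ is a genuine $p$-adic integer, and all the congruences below may be read $p$-adically.

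First I would specialize $z=1$ in Lemma 2.6 to obtain
$$\sum_{k=1}^{m}(-1)^k\binom{m}k\binom{m+k}k\frac1k \equiv_p \sum_{k=1}^{m}\binom{3k}{k,k,k}\frac{3^{-3k}}k.$$
The left-hand side is evaluated in closed form by Lemma 2.5 (with $n=m$), giving $-2H(m)=-2H(\lfloor p/3\rfloor)$. Combining these two facts yields the core congruence
$$\sum_{k=1}^{\lfloor p/3\rfloor}\binom{3k}{k,k,k}\frac{3^{-3k}}k \equiv_p -2H(\lfloor p/3\rfloor).$$

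Next I would multiply through by $n$ and invoke Corollary 2.4, which asserts precisely that $-2nH(\lfloor p/3\rfloor)\equiv_p 3\left(\frac{3^{(p-1)n}-1}{p}\right)$. This produces
$$n\sum_{k=1}^{\lfloor p/3\rfloor}\binom{3k}{k,k,k}\frac{3^{-3k}}k \equiv_p 3\left(\frac{3^{(p-1)n}-1}{p}\right).$$
The decisive point is that the quantity $\left(3^{(p-1)n}-1\right)/p$ on the right is itself an integer, by Fermat's little theorem $3^{p-1}\equiv_p1$; thus both sides of the last congruence are bona fide $p$-adic integers. Multiplying a congruence $a\equiv_p b$ between $p$-adic integers through by $p$ gives $pa\equiv_{p^2}pb$, and applying this here converts the factor $\left(3^{(p-1)n}-1\right)/p$ back into $3^{(p-1)n}-1$ while sharpening the modulus to $p^2$, delivering exactly
$$pn\sum_{k=1}^{\lfloor p/3\rfloor}\binom{3k}{k,k,k}\frac{3^{-3k}}k \equiv_{p^2} 3\left(3^{(p-1)n}-1\right).$$

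I do not anticipate a serious obstacle, since every ingredient is already assembled; the only step demanding genuine care is the legitimacy of the final lift. One must be certain that the intermediate congruence holds between $p$-adic integers, and not merely between formal fractions, so that multiplying by $p$ truly strengthens the modulus rather than discards information. This is guaranteed on the right by the divisibility $p\mid 3^{(p-1)n}-1$ and on the left by the unit status of $3$ and of each $k\leq m$; verifying this integrality is the heart of the argument, and I would emphasize it explicitly.
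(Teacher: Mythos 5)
Your proposal is correct and follows exactly the route the paper intends: the paper's proof of this corollary is the single line ``Follows from combining Corollary 2.4, Lemma 2.5 and Lemma 2.6 with $z=1$,'' and your chain (Lemma 2.6 at $z=1$, then Lemma 2.5 with $n=\lfloor p/3\rfloor$, then Corollary 2.4, then multiplying the mod-$p$ congruence of $p$-adic integers by $p$ to get the mod-$p^2$ statement) is precisely that combination, with the integrality check for the lift made explicit where the paper leaves it implicit.
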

\begin{proof} Follows from combining Corollary 2.4, Lemma 2.5 and Lemma 2.6 with $z=1$.
\end{proof}

\section{Main results on the sequences $a_1(n)$ and $a_2(n)$}

\begin{theorem} For a prime $p$ and $n\in\mathbb{P}$, we have $a_1(pn)\equiv_{p^2} 0$.
\end{theorem}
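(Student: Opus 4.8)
The plan is to extract an explicit factor of $pn$ from every summand and then show that the remaining cofactor sum vanishes modulo $p$. First I would simplify the sign via $(-1)^{pn-k}=(-1)^{n-k}$ (since $p$ is odd) and rewrite the product of the two ``inner'' binomials through the Fuss--Catalan factorization $\binom{3k+1}{k}\binom{2k+1}{k}=(3k+1)\binom{3k}{k,k,k}\frac{1}{k+1}$. Combining this with the elementary identity $(3k+1)\binom{pn}{3k+1}=pn\binom{pn-1}{3k}$ recasts the definition as
$$a_1(pn)=pn\sum_{k}(-1)^{n-k}\binom{3k}{k,k,k}\frac{1}{k+1}\binom{pn-1}{3k}\binom{pn+k}{k}\,3^{pn-3k-1}.$$
Thus it suffices to prove that the displayed cofactor sum $S$ satisfies $S\equiv_p 0$.

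To analyze $S$ modulo $p$, I would first use Fermat's little theorem in the form $3^{pn}\equiv_p 3^n$, and then Kummer's carry count: the multinomial $\binom{3k}{k,k,k}$ is divisible by $p$ unless every base-$p$ digit of $k$ is at most $\lfloor p/3\rfloor$, so only such $k$ survive. Writing $k=k_0+p\kappa$ with $0\le k_0<p$, Lucas's theorem yields $\binom{pn-1}{3k}\equiv_p(-1)^{k_0}\binom{n-1}{3\kappa}$ (using $\binom{p-1}{3k_0}\equiv_p(-1)^{k_0}$ together with the fact that the higher digits of $pn-1$ are the digits of $n-1$), and $\binom{pn+k}{k}\equiv_p\binom{n+\kappa}{\kappa}$, a factor independent of the units digit $k_0$.

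The decisive step is that, for each fixed $\kappa$, the sum over the units digit collapses: collecting the $k_0$-dependent factors, the two signs $(-1)^{k_0}$ cancel and the powers of $3$ separate, leaving precisely
$$\sum_{k_0=0}^{\lfloor p/3\rfloor}\binom{3k_0}{k_0,k_0,k_0}\frac{3^{-3k_0}}{k_0+1},$$
which is $\equiv_p 0$ by Corollary 2.2. Hence every $\kappa$-block of $S$ vanishes, so $S\equiv_p 0$ and therefore $a_1(pn)\equiv_{p^2}0$. The hard part will be the digit bookkeeping in this last step: one must verify that the units-digit range is exactly $\{0,\dots,\lfloor p/3\rfloor\}$ for each block that the upper summation limit actually permits (which rests on $3\lfloor p/3\rfloor\le p-1$), that blocks in which some digit of $\kappa$ exceeds $\lfloor p/3\rfloor$ are annihilated by $\binom{3k}{k,k,k}\equiv_p 0$, and that the boundary cases $k_0=p-1$ (where $p\mid k+1$) are settled by a $p$-adic valuation comparison ensuring the Fuss--Catalan factor still vanishes.
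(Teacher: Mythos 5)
Your proposal is correct in outline and lands on the same final ingredient as the paper (the first congruence of Corollary 2.2 applied to the units‑digit sum), but the route is genuinely different in two respects. First, you extract the factor $pn$ \emph{exactly}, via the Fuss--Catalan rewrite $\binom{3k+1}{k}\binom{2k+1}{k}=\frac{3k+1}{k+1}\binom{3k}{k,k,k}$ combined with the absorption identity $(3k+1)\binom{pn}{3k+1}=pn\binom{pn-1}{3k}$; the paper instead extracts $p$ only modulo $p^2$, using Gessel's congruence $\binom{p}{i}\equiv_{p^2}(-1)^{i-1}\frac{p}{i}$ after a Lucas reduction. Your version buys a real simplification: once $pn$ is pulled out, the entire cofactor only needs to be controlled modulo $p$, so the ``large units digit'' terms are killed by Kummer's theorem rather than by the paper's separate mod-$p^2$ vanishing argument, and Gessel's congruence is not needed at all. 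The price is the denominator $k+1$: the paper's arrangement keeps $\frac{1}{3r+1}$ with $3r+1<p$, so no $p$-adic denominator ever appears, whereas you must handle $k_0=p-1$. That flagged boundary case does close: if $p^j\parallel(k+1)$ with $j\geq1$ then the last $j$ base-$p$ digits of $k$ equal $p-1$, and the digit-sum formula $v_p\binom{3k}{k,k,k}=\frac{3S_p(k)-S_p(3k)}{p-1}$ gives $v_p\binom{3k}{k,k,k}\geq 2j>j$, so the integer $\binom{3k}{k,k,k}/(k+1)$ is still divisible by $p$. The remaining bookkeeping you list (the units-digit range $\{0,\dots,\lfloor p/3\rfloor\}$ being available for every $\kappa$ with $3\kappa\leq n-1$, which follows from $p\kappa+\lfloor p/3\rfloor\leq\lfloor(pn-1)/3\rfloor$, and the sign cancellation $(-1)^{k_0}\cdot(-1)^{3k_0}=1$) all checks out, so the argument is sound once those details are written down.
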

\begin{proof} Let $k=pm+r$ for $0\leq r\leq p-1$. Note: $3k+1\leq pn$ and $3pm+3r+1\leq pn$. Write
\begin{align*} a_1(pn)=\sum_{m=0}^{\lfloor n/3\rfloor}\sum_{r=0}^{p-1}
(-1)^{pn-pm-r}&\binom{3pm+3r+1}{pm+r}\binom{2pm+2r+1}{pm+r} \times \\
&\binom{pn}{3pm+3r+1}\binom{pn+pm+r}{pm+r}3^{pn-3pm-3r-1}.\end{align*}
If $t:=3r+1\geq p+1$, it is easy to show that the following terms vanish modulo $p^2$:
$$\binom{3pm+t}{pm+r}\binom{2pm+2r+1}{pm+r}\binom{pn}{3pm+t}=
\binom{3pm+t}{pm+r,pm+r,pm+r+1}\binom{pn}{3pm+t}.$$
Therefore, we may restrict to the remaining sum with $3r+1\leq p$:
\begin{align*} a_1(pn)=\sum_{m=0}^{\lfloor n/3\rfloor}\sum_{r=0}^{\lfloor(p-1)/3\rfloor}
(-1)^{pn-pm-r}&\binom{3pm+3r+1}{pm+r}\binom{2pm+2r+1}{pm+r} \times \\
&\binom{pn}{3pm+3r+1}\binom{pn+pm+r}{pm+r}3^{pn-3pm-3r-1}.\end{align*}
We need Lucas's congruence $\binom{pb+c}{pd+e}\equiv_p\binom{d}d\binom{c}e$ to arrive at 
\begin{align*} a_1(pn)\equiv_p\sum_{m=0}^{\lfloor n/3\rfloor}\sum_{r=0}^{\lfloor(p-1)/3\rfloor}
(-1)^{pn-pm-r}&\binom{3m}m\binom{3r+1}r\binom{2m}m\binom{2r+1}r\times \\
&\binom{pn}{3pm+3r+1}\binom{n+m}m3^{pn-3pm-3r-1}.\end{align*}
For $0<i<p$, we apply Gessel's congruence $\binom{p}i\equiv_{p^2}(-1)^{i-1}\frac{p}i$ (if $p=3r+1$, know that $k$ is even; in this case, still the corresponding term properly absorbs into the sum below) so that
\begin{align*} \binom{pn}{3pm+3r+1}&=\frac{pn}{3pm+3r+1}\binom{pn-1}{3pm+3r} \\
&=\frac{pn}{3pm+3r+1}\binom{p(n-1)+p-1}{3pm+3r} 
\equiv_{p^2}(-1)^r\frac{pn}{3r+1}\binom{n-1}{3m}, \end{align*}
which leads to
\begin{align*} a_1(pn)\equiv_{p^2}pn\sum_{m=0}^{\lfloor n/3\rfloor}\sum_{r=0}^{\lfloor(p-1)/3\rfloor}
(-1)^{n-m-r}&\binom{3m}m\binom{3r+1}r\binom{2m}m\binom{2r+1}r\times \\
&\frac{(-1)^r}{3r+1}\binom{n-1}{3m}\binom{n+m}m3^{pn-3pm-3r-1}.\end{align*}
Next, we use Fermat's Little Theorem and {\em decouple} the double sum to obtain
\begin{align*} a_1(pn)\equiv_{p^2}n&\sum_{m=0}^{\lfloor n/3\rfloor}(-1)^{n-m}3^{n-3m-1}\binom{3m}m\binom{2m}m
\binom{n-1}{3m}\binom{n+m}m \times \\
&p\sum_{r=0}^{\lfloor(p-1)/3\rfloor}\binom{3r+1}r\binom{2r+1}r\frac{3^{-3r}}{3r+1}.\end{align*}
If we invoke Corollary 2.2, the sum over $r$ is already divisible by $p^2$ which is enough to conclude $a_1(pn)\equiv_{p^2}0$.
\end{proof} 

\begin{theorem} For a prime $p$ and $n\in\mathbb{P}$, we have $a_2(pn)\equiv_{p^2} 0$.
\end{theorem}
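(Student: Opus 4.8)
The plan is to run the argument of Theorem 3.1 essentially verbatim, with every relevant index shifted by one and with the \emph{second} congruence of Corollary 2.2 replacing the first. Setting $k=pm+r$ with $0\le r\le p-1$ and separating the sum into a double sum over $m$ and $r$, I first discard the terms with $t:=3r+2\ge p+1$: exactly as in Theorem 3.1 the product
$$\binom{3pm+t}{pm+r}\binom{2pm+2r+2}{pm+r}\binom{pn}{3pm+t}=\binom{3pm+t}{pm+r,pm+r,pm+r+2}\binom{pn}{3pm+t}$$
is divisible by $p^2$, so the summation collapses to the range $0\le r\le\lfloor(p-2)/3\rfloor$. It is worth noticing immediately that this is precisely the range of summation in the second congruence of Corollary 2.2, which is the structural reason the proof will close.

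Next I would apply Lucas's congruence $\binom{pb+c}{pd+e}\equiv_p\binom bd\binom ce$ to replace $\binom{3pm+3r+2}{pm+r}$, $\binom{2pm+2r+2}{pm+r}$ and $\binom{pn+pm+r}{pm+r}$ modulo $p$ by $\binom{3m}m\binom{3r+2}r$, $\binom{2m}m\binom{2r+2}r$ and $\binom{n+m}m$, and then Gessel's congruence $\binom pi\equiv_{p^2}(-1)^{i-1}\frac pi$ to the last factor in the shape
$$\binom{pn}{3pm+3r+2}=\frac{pn}{3pm+3r+2}\binom{p(n-1)+(p-1)}{3pm+3r+1}\equiv_{p^2}(-1)^{r+1}\frac{pn}{3r+2}\binom{n-1}{3m}.$$
This pulls out the decisive factor $pn$, after which I only need the surviving double sum modulo $p$; Fermat's little theorem then lets me write $3^{pn-3pm-3r-2}\equiv 3^{n-3m-2}3^{-3r}\pmod p$ and \emph{decouple} the $m$- and $r$-sums. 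Gathering the $r$-dependent factors produces exactly the summand of the second congruence of Corollary 2.2, namely
$$\binom{3r+2}r\binom{2r+2}r\frac{3^{-3r}}{3r+2}=\binom{3r+1}{r,r,r+1}\frac{3^{-3r}}{r+2}=g(r),$$
so the inner sum is $\sum_{r=0}^{\lfloor(p-2)/3\rfloor}g(r)\equiv_p 0$; multiplying by the extracted $pn$—the remaining $m$-sum being a $p$-integral factor—gives $a_2(pn)\equiv_{p^2}0$.

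The single point that is not automatic, and which I expect to be the main obstacle, is the boundary term $3r+2=p$, which occurs exactly when $p\equiv_3 2$ and $r=\tfrac{p-2}3$; this is the direct analogue of the parenthetical caveat in the proof of Theorem 3.1. Here $3pm+3r+2=p(3m+1)$, so the quotient $\frac{pn}{3pm+3r+2}=\frac{n}{3m+1}$ no longer carries a factor of $p$ and the Gessel reduction above is not literally available. I would therefore isolate this value of $r$ and evaluate its term directly: the factor $\binom{pn}{p(3m+1)}$ can be reduced by the classical congruence $\binom{pb}{pc}\equiv_{p^3}\binom bc$ recalled in the introduction, while the companion coefficient $\binom{p(3m+1)}{pm+r,pm+r,pm+r+2}$ supplies an honest factor of $p$ by Kummer's theorem, since adding its three parts in base $p$ produces a carry out of the units place ($r+r+(r+2)=3r+2=p$). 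The task is then to check that, after these two substitutions, the boundary term is congruent modulo $p^2$ to $pn$ times a $p$-integral factor times $g\!\left(\tfrac{p-2}3\right)$, i.e. that it is exactly the missing $r=\tfrac{p-2}3$ summand of $\sum_r g(r)$; granting this, it is annihilated together with the rest of the sum by Corollary 2.2. Pinning down the constant in front of $g\!\left(\tfrac{p-2}3\right)$ so that it agrees with that of the generic terms is the one genuinely delicate computation in the argument.
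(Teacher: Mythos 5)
Your proposal follows exactly the route the paper intends: the paper omits the proof of this theorem entirely, stating only that the idea mirrors Theorem 3.1, and your index-shifted version of that argument --- truncating to $3r+2\le p$ via the $p^2$-divisibility of the multinomial, applying Lucas and Gessel to extract the factor $pn$, decoupling the $m$- and $r$-sums by Fermat's little theorem, and recognizing the inner sum as $\sum_{r=0}^{\lfloor(p-2)/3\rfloor}g(r)$, which is killed by the second congruence of Corollary 2.2 --- is precisely that proof. The boundary case $3r+2=p$ that you isolate but do not fully compute is handled only by a parenthetical remark in the paper's proof of the analogous Theorem 3.1, so your treatment is, if anything, more explicit than the source's.
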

\begin{proof} The idea is similar to Theorem 0.3, hence it is omitted to avoid needless duplicity.
\end{proof}

\newpage

\section{Main result on the sequence $a_0(n)$}

\noindent
In this section, we prove a weaker version of Conjecture 1.1 which still required a more delicate touch than what has been demonstrated in the previous section for the other two sequences. We also believe that our techniques pave the way in settling the conjecture, fully.

\begin{theorem} For a prime $p$ and $n\in\mathbb{P}$, we have $a_0(pn)\equiv_{p^2} a_0(n)$. 
\end{theorem}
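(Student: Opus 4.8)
The plan is to reuse the bookkeeping of the proof of $a_1(pn)\equiv_{p^2}0$ almost verbatim, but to carry one extra order of $p$ throughout. Writing $k=pm+r$ with $0\le r\le p-1$, I would first discard the terms with $3r\ge p$: for those $r$ the multinomial $\binom{3pm+3r}{pm+r,pm+r,pm+r}$ already carries one factor of $p$ (the base-$p$ addition $r+r+r$ produces a carry), while $\binom{pn}{3pm+3r}$ carries a second (its lower entry has nonzero residue mod $p$ because $r\neq0$), so such terms vanish modulo $p^2$. This leaves the double sum over $0\le r\le\lfloor p/3\rfloor$. The decisive difference from the $a_1,a_2$ cases is that the block $r=0$ must now survive and reproduce $a_0(n)$, so I cannot simply bound it away.

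For the block $r=0$ I would invoke the strong congruence $\binom{pa}{pb}\equiv\binom{a}{b}\pmod{p^3}$ on each of $\binom{3pm}{pm,pm,pm}$, $\binom{pn}{3pm}$ and $\binom{p(n+m)}{pm}$; the only factor that resists is the power of $3$, where $3^{pn-3pm}=(3^p)^{n-3m}$ together with $3^p=3(1+p\,b_p(3))$ forces a Fermat-quotient correction. After expanding $(1+p\,b_p(3))^{n-3m}\equiv1+(n-3m)p\,b_p(3)$, the block $r=0$ becomes
\[ a_0(n)+p\,b_p(3)\sum_m(-1)^{n-m}(n-3m)\binom{3m}{m,m,m}\binom{n}{3m}\binom{n+m}{m}3^{n-3m}\pmod{p^2}, \]
and I abbreviate the trailing sum as $C_0$.

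For the blocks $1\le r\le\lfloor p/3\rfloor$ I would mimic the earlier proof exactly: Lucas on the multinomial and on $\binom{p(n+m)+r}{pm+r}$, the Gessel-type expansion $\binom{pn}{3pm+3r}\equiv(-1)^{r-1}\frac{pn}{3r}\binom{n-1}{3m}\pmod{p^2}$, and $3^{pn-3pm-3r}\equiv3^{n-3m-3r}\pmod p$, after which the sign collapses to the $r$-free $(-1)^{n+1}(-1)^m$ and the double sum decouples. The resulting $r$-sum $\frac{pn}{3}\sum_r\binom{3r}{r,r,r}\frac{3^{-3r}}{r}$ is, by Corollary 2.7, congruent to $3^{(p-1)n}-1\equiv n\,p\,b_p(3)\pmod{p^2}$ (using $3^{p-1}=1+p\,b_p(3)$), so the blocks $r\ge1$ collapse to
\[ (-1)^{n+1}n\,p\,b_p(3)\sum_m(-1)^m\binom{3m}{m,m,m}\binom{n-1}{3m}\binom{n+m}{m}3^{n-3m}\pmod{p^2}, \]
whose trailing sum I call $C_1$.

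The main obstacle, and the reason the argument is more delicate than for $a_1,a_2$, is that both surviving corrections are genuinely of size $p$ and share the same Fermat quotient $b_p(3)$, so the theorem holds only if they cancel. Assembling the two blocks gives $a_0(pn)\equiv a_0(n)+p\,b_p(3)\,(C_0+(-1)^{n+1}nC_1)\pmod{p^2}$, and I would finish by proving $C_0+(-1)^{n+1}nC_1=0$ term-by-term: after factoring out the common $(-1)^n(-1)^m\binom{3m}{m,m,m}\binom{n+m}{m}3^{n-3m}$, the bracket collapses to $(n-3m)\binom{n}{3m}-n\binom{n-1}{3m}$, which is identically zero by the absorption identity $(n-3m)\binom{n}{3m}=n\binom{n-1}{3m}$. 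Finally I would remark that the method stops at $p^2$: both the Gessel congruence and the vanishing of the $3r\ge p$ terms are only modulo $p^2$, so reaching the conjectured $p^3$ would require sharpening each of these inputs.
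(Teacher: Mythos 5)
Your proposal is correct and follows essentially the same route as the paper: the same split of $k=pm+r$, discarding $3r\ge p$ via two base-$p$ carries, the Jacobsthal-type congruence plus the Fermat quotient of $3$ for the $r=0$ block, and Lucas/Gessel plus Corollary 2.7 to decouple the $r\ge 1$ blocks, with the two $O(p)$ corrections cancelling via the absorption identity $(n-3m)\binom{n}{3m}=n\binom{n-1}{3m}$ (which the paper uses implicitly). The only cosmetic difference is that you carry the common factor as $p\,b_p(3)$ where the paper carries it as $p\sum_k\binom{3k}{k,k,k}3^{-3k}/k$; these agree modulo $p$ by Corollary 2.7, so the cancellation is the same.
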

\begin{proof}  Let $k=pm+r$ for $0\leq r< p$. Note: $3k=3pm+3r\leq pn$. Using the new parameters, 
\begin{align*} a_0(pn)=\sum_{m=0}^{\lfloor n/3\rfloor}\sum_{r=0}^{p-1}
(-1)^{pn-pm-r}&\binom{3pm+3r}{pm+r}\binom{2pm+2r}{pm+r} \times \\
&\binom{pn}{3pm+3r}\binom{pn+pm+r}{pm+r}3^{pn-3pm-3r}.\end{align*}
If $3r\geq p+1$, a straight-forward argument shows the following terms vanish modulo $p^2$:
$$\binom{3pm+3r}{pm+r}\binom{2pm+2r}{pm+r}\binom{pn}{3pm+3r}=
\binom{3pm+3r}{pm+r,pm+r,pm+r}\binom{pn}{3pm+3r}.$$
Therefore, we may restrict to the remaining sum with $3r\leq p$: thus $a_0(pn)$ becomes
$$\sum_{m=0}^{\lfloor n/3\rfloor}\sum_{r=0}^{\lfloor p/3\rfloor}
(-1)^{pn-pm-r}\binom{3pm+3r}{pm+r}\binom{2pm+2r}{pm+r}
\binom{pn}{3pm+3r}\binom{pn+pm+r}{pm+r}3^{pn-3pm-3r}.$$

\noindent
Let's isolate the case $r=0$ and subtract $a_0(n)$ from it, then implement the milder form $\binom{pb}{pd}\equiv_p\binom{b}d$ and bring Corollary 2.7 to bear. The outcome is:
\begin{align*} &\sum_{m=0}^{\lfloor n/3\rfloor}
(-1)^{n-m}\binom{3m}m\binom{2m}m
\binom{n}{3m}\binom{n+m}m3^{pn-3pm}-a_0(n) \\
&=\sum_{m=0}^{\lfloor n/3\rfloor}
(-1)^{n-m}\binom{3m}m\binom{2m}m
\binom{n}{3m}\binom{n+m}m3^{n-3m}\left(3^{(p-1)(n-3m)}-1\right) \\
&\equiv_{p^2}p\sum_{m=0}^{\lfloor n/3\rfloor}(-1)^{n-m}\binom{3m}{m,m,m}\binom{n}{3m}
\binom{n+m}m\frac{(n-3m)}{3^{-n+3m+1}}\sum_{k=1}^{\lfloor p/3\rfloor}\binom{3k}{k,k,k}\frac{3^{-3k}}k.
\end{align*}
The remaining sum in $a_0(pn)$, without the term $r=0$ and modulo $p^2$, takes the form
\begin{align} \sum_{m=0}^{\lfloor n/3\rfloor}\sum_{r=1}^{\lfloor p/3\rfloor}
(-1)^{n-m-r}&\binom{3m}m\binom{3r}r\binom{2m}m\binom{2r}r
\binom{pn}{3pm+3r}\binom{n+m}m3^{n-3m-3r}. \end{align}
To this sum, we apply Gessel's congruence $\binom{p-1}i\equiv_{p^2}(-1)^i$,  with $0<i<p$, so that
\begin{align*} \binom{pn}{3pm+3r}&=\frac{pn}{3pm+3r}\binom{pn-1}{3pm+3r-1} \\
&=\frac{pn}{3pm+3r}\binom{p(n-1)+p-1}{3pm+3r-1}\equiv_{p^2}(-1)^{r-1}\frac{pn}{3r}\binom{n-1}{3m},
\end{align*}
which converts (4.1), modulo $p^2$, into 
\begin{align*} &pn\sum_{m=0}^{\lfloor n/3\rfloor}\sum_{r=1}^{\lfloor p/3\rfloor}
(-1)^{n-m-1}\binom{3m}m\binom{3r}r\binom{2m}m\binom{2r}r
\frac1{3r}\binom{n-1}{3m}\binom{n+m}m3^{n-3m-3r} \\
&\equiv_{p^2} -p\sum_{m=0}^{\lfloor n/3\rfloor}(-1)^{n-m}\binom{3m}{m,m,m}\binom{n}{3m}\binom{n+m}m\frac{(n-3m)}{3^{-n+3m+1}}
\sum_{r=1}^{\lfloor p/3\rfloor}\binom{3r}r\binom{2r}r\frac{3^{-3r}}r.
\end{align*}
However, the two sums (one for $r=0$ above, the other for $r>0$) add up to zero. That means $a_0(pn)-a_0(n)\equiv_{p^2}0$, as desired.
\end{proof}

\bigskip

\smallskip
\noindent
\textbf{Acknowledgements.} The author is grateful to Armin Straub for bringing the conjecture on the Almkvist-Zudilin numbers to his attention.

\bigskip

\enddocument